\newcounter{hipotese}
\theoremstyle{plain}
\newtheorem{Teo}{Theorem}[section]
\newtheorem{Def}[Teo]{Definition}
\newtheorem{Ex}[Teo]{Example}
\newtheorem{Lema}[Teo]{Lemma}
\newtheorem{Prop}[Teo]{Proposition}
\newtheorem{maintheorem}{Theorem}
\theoremstyle{remark}
\newcommand{\R}{\mathbb{R}}
\newcommand{\N}{\mathbb{N}}
\newcommand{\nto}{\stackrel{n \rightarrow +\infty}{\longrightarrow}}
\begin{document}

\title{On the ergodic theory of impulsive semiflows}

\author[S. M. Afonso]{S. M. Afonso}
\address{Suzete Maria Afonso\\ Universidade Estadual Paulista (UNESP), Instituto de Geoci\^{e}ncias e Ci\^{e}ncias Exatas, C\^ampus de Rio Claro,  Avenida 24-A, 1515, Bela Vista, Rio Claro, S\~ao Paulo\\ 13506-900, Brazil}
\email{s.afonso@unesp.br} 

\author[E. Bonotto]{E. Bonotto}
\address{Everaldo Bonotto \\ Instituto de Ci\^encias Matem\'aticas e de Computa\c c\~ao\\ Universidade de S\~ao Paulo, Campus de S\~ao Carlos\\  13560-970, S\~ao  Carlos SP, Brazil}
\email{ebonotto@icmc.usp.br}

\author[J. Siqueira]{J. Siqueira}
\address{Jaqueline Siqueira\\ Departamento de Matem\'atica, Instituto de Matem\'atica, Universidade Federal do Rio de Janeiro, Caixa Postal 68530, Rio de Janeiro, RJ 21945-970, Brazil}
\email{jaqueline@im.ufrj.br}

\thanks{SMA and EB were partially supported by grant 2020/14075-6, S\~ao
Paulo Research Foundation (FAPESP).  EB was partially supported by FAPESP grant 2020/14075-6 and CNPq grants 431517/2018-5 and 310540/2019-4. JS was partially supported by  grant E-26/010/002610/2019, Rio de Janeiro Research Foundation (FAPERJ)}
\keywords{Impulsive semiflows; Ergodic Theory; Variational Principle}
\subjclass[2010]{37A05, 37A99}
\pagenumbering{arabic}

\begin{abstract} In this work, we established sufficient conditions for the existence of invariant measures for impulsive semiflows. 
In addition, we exhibit sufficient conditions to obtain a Variational Principle  and the existence and uniqueness of equilibrium states. 
        Some examples are provided to illustrate the theory developed.
  \end{abstract}

\maketitle

\tableofcontents


\section{Introduction}

The evolution in time of several phenomena studied by the sciences pre\-sents, at certain moments, abrupt changes of state, where the duration of the disturbances is null or negligible compared to the duration of the phenomenon. These abrupt changes are called impulses and they can be observed in phenomena in physics, biology, economics, control theory and information science, see \cite{Li} and \cite{Stamov}. 

Impulsive dynamical systems (IDS) can be interpreted as suitable mathe\-matical models of real world phenomena that display abrupt changes in their behavior as mentioned above. More precisely, an IDS is described by three objects: a continuous semiflow on a phase space $M$; a set $D$ contained in $M$ where the flow experiments sudden perturbations; and an impulsive function from $D$ to $M$, which determines the change in the trajectory each time it
collides with the impulsive set $D$.  In spite of their great range of applications, IDS have
started being studied from the viewpoint of ergodic theory only recently in the work  \cite{AC} and subsequently \cite{ACV}, \cite{ACS}, \cite{UFRGS}, \cite{Nelda1}. A key challenge, inherent to the dynamics, is that in general, an impulsive semiflow is not continuous, therefore one can not, a priori, even to guarantee the existence of invariant measures.  Throughout the years the ergodic theory has shown to be a useful way to obtain features of dynamical systems.  Describing the behavior of the orbits of a dynamical system can be a challenging task, especially for systems that have a complicated topological and geometrical structure, therefore, studying its behavior via invariant probability measures is a useful tool. For instance, Birkhoff's Ergodic Theorem  shows that  almost every initial condition in each ergodic component of an invariant measure has the same statistical distribution in space.

In the pioneer work \cite{AC}, the authors studied sufficient conditions to the existence of invariant measures. In the present work, we aim to provide sufficient conditions for the existence of invariant measures, that imply the ones given by Alves and Carvalho in \cite{AC}, and are somewhat easier to verify.  Our main goal in simplifying the conditions for the existence of invariant measures of impulsive semiflows is to allow and invite other works  in the ergodic theory viewpoint  allowing a future  use of the ergodic theory in its all power. In this sense, we revisit the results concerning the Variational Principle established in \cite{ACV} and the existence and uniqueness of  equilibrium states established in \cite{ACS}, where we present  sufficient conditions to verify the hypothesis regarding the impulsive set $D$ and the function $\tau^*$ (defined in~\eqref{function tau star}).

 The paper is organized as follows. In the second section we state precisely our main theorems. In Section 3, we prove Theorems \ref{main theorem 1} and \ref{main theorem 2} by using the Special Strong tube condition as the main tool. In  Section 4, we provide some examples to illustrate the developed theory. Finally, in Section 5, we establish  Theorem \ref{te.existence_uniqueness}.


\section{Main results}\label{Main results}

Let $M$ be a compact manifold endowed with the Riemannian metric $d$ and set $\R^+_0 = \{x \in \R: x \geq 0\}$. Consider a  continuous semiflow $\varphi\colon \R^+_0 \times M\to M$, a nonempty compact  set $D\subset M$ and a continuous map $I\colon D \to M$ such that $I(D)\cap D=\emptyset$. Under these conditions, we say that $(M,\varphi,D,I)$ is an \emph{impulsive dynamical system} or an IDS  for short. 

Throughout this work, we shall assume that the semiflow $\varphi$  is  the flow associated to the diffe\-rential equation
$$\dot{x} = f(x), $$
where $f\colon M\to M$ is a smooth function.

 An impulsive dynamical system generates an impulsive semiflow as follows. 
We start by defining the sequence of hitting times at the impulsive set $D$. 
The first visit of each $\varphi$-trajectory to $D$ is given by the function $\tau_1\colon M\to~[0,+\infty]$, defined as
$$
\tau_1(x)=
\left\{
\begin{array}{ccc}
\inf\left\{t> 0 \colon \varphi_t(x)\in D\right\} , &\text{if }& \varphi_t(x)\in D\text{ for some }t>0;\\
+\infty, && \text{otherwise.}
\end{array}
\right.
$$
The \emph{impulsive trajectory} $\gamma_x$  and the subsequent \emph{impulsive times} $\tau_2(x),\tau_3(x),$ $\tau_4(x),\dots$ (possibly finitely many) of a given point $x\in M$ are defined according to the following rules:
for $0\le t<\tau_{1}(x)$ we set  $\gamma_x(t)=\varphi_t(x).$

Assuming that $\gamma_x(t)$ is defined for $t<\tau_{n}(x)$ for some {$n\ge 2$}, we set
  $$\gamma_x(\tau_{n}(x))=I(\varphi_{\tau_n(x)-\tau_{n-1}(x)}(\gamma_x({\tau_{n-1}(x)}))).$$
  Defining the $(n+1)^{\text{th}}$ impulsive time of $x$ as
  $$\tau_{n+1}(x)=\tau_n(x)+\tau_1(\gamma_x(\tau_n(x))),$$
 for $\tau_n(x)<t<\tau_{n+1}(x)$, we set
         $$\gamma_x(t)=\varphi_{t-\tau_n(x)}(\gamma_x(\tau_n(x))).$$
We define the \emph{time duration} of the trajectory of $x$ as
 $\Upsilon(x)=\sup_{n\ge 1}\,\{\tau_n(x)\}.
 $
The impulsive semiflow  $\psi\colon \mathbb{R}^{+}_0\times M \to M$ associated with the impulsive dynamical system $(M,\varphi,D,I)$ is defined by $\psi(t, x) = \gamma_x(t)$, $(t, x) \in \mathbb{R}^{+}_0\times M$.
We shall denote $\psi_t(x) = \psi(t, x)$, for every  $(t, x) \in \mathbb{R}^{+}_0\times M$.

We point out that the assumption $I(D) \cap D=\emptyset$ implies that the time duration of each trajectory is infinite, see \cite[Remark 1.1]{AC}.

\begin{Def}
A probability measure $\mu$ on the Borel sets of a metric space $M$  is called  {\bf invariant} under a semiflow $\phi$ if $$\mu(\phi_t^{-1}(A)) = \mu(A)  \ \text{for every Borel set }  \ A\subset M \  \text{and for all} \ t\geq 0.  $$
\end{Def}

Recall that a continuous semiflow on a compact metric space always admit an invariant measure. However, in the lack of continuity, which is the case of impulsive semiflows, one can not guarantee that. 
Our first main result provides sufficient conditions for the existence of invariant measures under impulsive semiflows. 
\begin{maintheorem}\label{main theorem 1}
Let $\varphi\colon  M\times\mathbb{R}^{+}_0 \to M$ be a  smooth semiflow, $D$ be a smooth compact submanifold and $I\colon  D\to M$ be a continuous function. Let $\psi$ be the  impulsive semiflow generated by $(M, \varphi, D, I)$. If $D$ is a submanifold of codimension one that is transversal to the flow direction, then there exists at least one invariant measure under  the impulsive semiflow $\psi$.
\end{maintheorem}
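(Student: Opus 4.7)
The plan is to follow the quotient-space strategy of Alves and Carvalho \cite{AC}. Because the impulsive semiflow $\psi$ fails to be continuous on $M$, I will collapse each pair $x\sim I(x)$ for $x\in D$ to a single point, pass to a continuous quotient semiflow $\tilde\psi$ on a compact metric space $\tilde M=M/\!\sim$, apply Krylov--Bogolyubov to $\tilde\psi$, and lift the invariant measure back to $M$. The argument hinges on verifying, from the transversality hypothesis alone, the Special Strong Tube Condition of \cite{AC}, since this is exactly what makes the quotient semiflow continuous.

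First I would exploit the tubular flow theorem. Because $D$ is a smooth compact codimension-one submanifold transversal to $f$, around every $p\in D$ there are local coordinates in which $\varphi_t$ is translation and $D$ is a flat slice; by compactness of $D$, finitely many such flow boxes cover $D$ and produce a tubular neighborhood $V_D$ foliated by short flow arcs, each meeting $D$ exactly once. The same construction around $I(D)$ yields a neighborhood $V_{I(D)}$, and since $I(D)\cap D=\emptyset$ with both sets compact, we may shrink so that $V_D\cap V_{I(D)}=\emptyset$. These flow-box coordinates give: (i) continuity of the first hitting time $\tau_1$ on $\varphi^{-1}(D)\setminus D$, together with a uniform positive lower bound for $\tau_1$ on $I(D)$; (ii) the ability to push a small neighborhood of a point of $D$ by $\varphi_t$ in a jointly continuous way for small $t$; and (iii) the analogous statements around $I(D)$. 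These are exactly the geometric ingredients of the Special Strong Tube Condition.

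Once the tube condition is available, I would define $\tilde M=M/\!\sim$ with $\sim$ generated by $x\sim I(x)$ for $x\in D$, endowed with the quotient pseudometric. Disjointness of $D$ and $I(D)$ and continuity of $I$ make $\tilde M$ compact, Hausdorff, and metrizable. I then check that $\tilde\psi_t([x])=[\psi_t(x)]$ is well-defined and jointly continuous on $\mathbb{R}^+_0\times\tilde M$; the only delicate points are the glued classes $[x]=[I(x)]$ with $x\in D$, and continuity there is precisely what the tube condition provides via the compatible flow-box pictures near $D$ and near $I(D)$. Krylov--Bogolyubov then yields an invariant Borel probability $\tilde\mu$ on $\tilde M$, which I lift through a Borel section of the projection $\pi:M\to\tilde M$ (say, choosing the $D$-representative in each identified pair) to obtain a Borel probability $\mu$ on $M$. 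The relation $\pi\circ\psi_t=\tilde\psi_t\circ\pi$ combined with the invariance of $\tilde\mu$ yields $\psi$-invariance of $\mu$.

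The main obstacle is the second step: extracting the Special Strong Tube Condition from only transversality and codimension one, rather than taking it as a hypothesis as in \cite{AC}. Individual flow boxes are immediate from classical ODE theory, but the delicate part is \emph{uniformity}: the continuity and hitting-time estimates must be uniform along the compact set $D$ and must glue compatibly with the corresponding estimates near $I(D)$, so that $\tilde\psi$ is continuous at every glued class simultaneously. Once this uniformity is in place, the rest of the proof is essentially a repackaging of the quotient argument already developed in \cite{AC}.
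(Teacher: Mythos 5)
Your first half — deriving the tube condition for $D$ and $I(D)$ from compactness, codimension one and transversality — is exactly the paper's Proposition on the special strong tube condition, and your flow-box sketch is a reasonable substitute for the paper's argument via local defining submersions and Rolle's theorem. But the second half of your plan, as written, has genuine gaps. The map $\tilde\psi_t([x])=[\psi_t(x)]$ is \emph{not} well defined on all of $\tilde M=M/\!\sim$: for $x\in D$ the impulsive trajectory of $x$ itself follows $\varphi_t(x)$ for $0\le t<\tau_1(x)$ (the jump occurs only at positive hitting times), whereas the trajectory of $I(x)$ follows $\varphi_t(I(x))$, and these two points are not identified by $\sim$; so $[x]=[I(x)]$ but $[\psi_t(x)]\neq[\psi_t(I(x))]$. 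Likewise continuity fails at classes $[y]$, $y\in D$, approached from the set $D_\xi$ of points immediately downstream of $D$. The standard fix — and the one the literature uses — is to restrict to the forward-invariant set $\Omega_{\psi}\setminus D$ before quotienting, and this requires two nontrivial facts you never mention: that $I(\Omega_{\psi}\cap D)\subset\Omega_{\psi}$ (proved in the paper using the SSTC and the characterization $x\in\Omega_\psi\Leftrightarrow x\in J(x)$) and hence that $\psi_t(\Omega_{\psi}\setminus D)\subset\Omega_{\psi}\setminus D$ for $t>0$. Your lifting step is also off: choosing the $D$-representative of each class produces a measure possibly charging $D$, precisely where the semiconjugacy $\pi\circ\psi_t=\tilde\psi_t\circ\pi$ breaks down; one must instead use that $\pi$ is injective off $D$ and prove that the quotient invariant measure gives zero mass to $\pi(D)$ before lifting.

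For comparison, the paper does not re-run the quotient construction for this theorem at all. It verifies the two hypotheses of \cite[Theorem A]{AC} — continuity of the modified first hitting time $\tau^*_\xi$ on $X_\xi\cup D$ (proved from the tube condition) and the forward invariance of $\Omega_{\psi}\setminus D$ — and then invokes that theorem as a black box. Your route can be made to work (it is essentially the Alves--Carvalho construction), but only after inserting the non-wandering-set analysis and the $\tilde\mu(\pi(D))=0$ argument that your proposal currently skips; as it stands, the claim that $\tilde\psi$ is a continuous semiflow on the full quotient $\tilde M$ is false.
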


As mentioned in the introduction, in \cite{AC} the authors provided conditions to the existence of invariant measures. Here, we replace their conditions for the above ones which are more topological and somewhat easier to check.

\subsection{Topological entropy for non-continuous flows}
 
We aim to  show that the conditions presented in Theorem \ref{main theorem 1} together with the $1$-Lipschitz assumption of $I$ and  the transversality of $I(D)$
are enough to guarantee  that a Variational Principle holds.  We start recalling a suitable definition of topological entropy in the context of impulsive semiflows. 

The topological entropy  is a number that measures how chaotic is a system: it computes the exponential growth of  the trajectories of the system. 
Moreover, two equivalent systems, namely topologically conjugated,  have the same topological entropy. Thus, the topological entropy is a useful tool in the classification of systems. 

There are different ways for defining topological entropy, some of them are equivalent in the setting of continuous flows on compact manifolds (see \cite{Viana}).  In the absence of continuity,  the classical definitions are not suitable. In order to overpass it, in \cite{ACV} the authors  defined the following concept. 

Let $\phi\colon  M\times \mathbb{R}^{+}_{0} \to M$ be a semiflow, not necessarily continuous. 

Consider a function $\tau$ assigning to each $x\in M$ a  sequence $(\tau_n(x))_{n\in A(x)}$ of nonnegative numbers, where
either $A(x)=\N$ or $A(x)=\{1,\dots,\ell\}$ for some $\ell\in \N$.
We say that $\tau$ is \emph{admissible}  if there exists $\eta>0$ such that,
for all $x\in M$  and all $n\in \N$ with $n+1\in A(x)$, we have
\begin{enumerate}
\item   $\tau_{n+1}(x)-\tau_n(x)\geq\eta$;
\item
$\displaystyle \tau_n(\psi_t(x))=
\begin{cases}\tau_n(x)-t, & \text{if $\tau_{n-1}(x)< t < \tau_n(x)$};\\
\tau_{n+1}(x),& \text{if $t=\tau_n(x)$}.
\end{cases}
$

\end{enumerate}
For each   admissible function $\tau$, $x\in M$, $t>0$ and $0<\delta<\eta/2$, we define
$$J_{t,\delta}^\tau(x) = [0,t]\setminus\bigcup_{j\in A(x)}]\tau_j(x)-\delta,\tau_j(x)+\delta[.$$

When $\tau_1(x)>t$, one has  $ J_{t,\delta}^\tau(x)= [0,t]$. 

The \emph{$ \tau$-dynamical ball} of radius $\epsilon>0$ and length $T$  centered at $x$ is defined as
$$B^\tau(x,\psi, T,\epsilon,\delta)=\left\{y\in X\colon  d(\psi_t (x),\psi_t (y))< \epsilon,\: \text{for all} \; t\in J_{T,\delta}^{\tau}(x) \right\}.$$

A set $E\subset M$ is said to be $(\psi, \tau, T, \epsilon, \delta)$-separated set  if, for each $x\in E$,  whenever $y \in B^\tau(x,\psi, T,\epsilon,\delta)$, we have $y=x$. 

Define $s^{\tau}(\psi, T, \epsilon, \delta) =\sup \left\{ \# E: E \ \mbox{is a finite} \ (\psi, \tau, T, \epsilon, \delta)\mbox{-separated set}\right\}\!,$
where $\#E$ denotes the number of elements of $E$.  
Also, define the exponential growth rate as
$$ h^{\tau}(\psi, \epsilon, \delta)= \limsup_ {T\to +\infty}\frac{1}{T}\log s^{\tau}(\psi, T, \epsilon, \delta), $$
where we assume that $\log \infty= \infty$.  Then take the limit as $\epsilon$ goes to  zero
$$ h^{\tau}(\psi,\delta)= \lim_{\epsilon \to 0^{+}} h^{\tau}(\psi, \epsilon, \delta). $$

Finally, we take the limit as $\delta$ goes to zero to define the {\bf $\tau$-topological entropy} as
$$ h^{\tau}_{top}(\psi)= \lim_{\delta \to 0^{+}} h^{\tau}(\psi, \delta) . $$

Observe that the concept is well defined since the limits above exist because the functions $\epsilon\mapsto h^{\tau}(\psi, \epsilon, \delta) $ and  $\delta \mapsto h^{\tau}(\psi, \delta) $ are decreasing. 

The $\tau$-topological entropy is a suitable concept for non-continuous  semiflow in the sense that  the definition coincides with the classical one when the semiflow is continuous \cite[Theorem A]{ACV}.

We point out that, since the work \cite{ACV}, other notions were defined in \cite{Nelda1} and \cite{Nelda2}. We also refer to  \cite{UFRGS} where the authors compare all the previous notions with the ones that they define.


Here we  establish  conditions  for which the variational principle holds for the $\tau$-topological entropy. 
Recall that we are assuming that $I(D) \cap D = \emptyset$.

\begin{maintheorem}\label{main theorem 2}
Let $\varphi\colon  M\times\mathbb{R}^{+}_0 \to M$ be a  smooth semiflow, $D$ be a smooth compact submanifold of codimension one and $I\colon  D\to M$ be an 1-Lipschitz function. If $D$ and $I(D)$ are transversal to the flow direction, then the impulsive semiflow $\psi$ generated by the impulsive dynamical system $(M, \varphi, D, I)$ satisfies  
$$ h^{\tau}_{top}(\psi)= \sup_{\mu} \{ h_{\mu}(\psi)    \}, $$
where the supremum is taken over all $\mu$ invariant probability measure under $\psi$ and $h_{\mu}(\psi)$ stands for the measure theoretical entropy of $\psi$. 
\end{maintheorem}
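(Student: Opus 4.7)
The plan is to reduce Theorem \ref{main theorem 2} to the classical variational principle by passing to a quotient space on which the impulsive dynamics becomes a continuous semiflow. Consider the equivalence relation on $M$ generated by $x \sim I(x)$ for every $x\in D$, let $\widetilde{M}=M/\!\sim$ carry the quotient pseudometric $\widetilde{d}$, and write $\pi:M\to\widetilde{M}$ for the projection. The impulsive semiflow descends to a semiflow $\widetilde{\psi}_t([x])=[\psi_t(x)]$, and the codimension-one transversality of $D$ and $I(D)$ supplies the Special Strong Tube neighbourhoods that are precisely what is needed to verify that $(\widetilde{M},\widetilde{d})$ is a compact metric space and that $\widetilde{\psi}$ is continuous on $\widetilde{M}\times\mathbb{R}^+_0$. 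This is the same quotient framework used to obtain Theorem \ref{main theorem 1}, only now we must proceed without the contraction hypothesis on $I$ assumed in \cite{ACV}.

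Granting this continuous model, I would establish three correspondences. First, a bijection $\mu\leftrightarrow\pi_*\mu$ between $\psi$-invariant and $\widetilde{\psi}$-invariant Borel probability measures; here one uses that every $\psi$-invariant $\mu$ gives zero mass to $D\cup I(D)$, because transversality forces each trajectory to cross $D$ instantaneously, so $D$ is a wandering set of codimension one. Second, the identity $h^{\tau}_{top}(\psi)=h_{top}(\widetilde{\psi})$: a $(\widetilde{\psi},T,\varepsilon)$-separated set in $\widetilde{M}$ lifts to a $(\psi,\tau,T,\varepsilon,\delta)$-separated set in $M$ and vice versa, since the $\tau$-formalism excises exactly those short intervals around the impulsive times on which $\pi$ fails to be a local isometry, while the tubes make $\pi$ a local isometry elsewhere.

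The main obstacle, and the reason the argument of \cite{ACV} cannot simply be copied, is the identity $h_{\mu}(\psi)=h_{\pi_*\mu}(\widetilde{\psi})$ between measure-theoretic entropies in the absence of contraction. In \cite{ACV}, contraction guaranteed that small-diameter partitions of $\widetilde{M}$ lift to small-diameter partitions of $M$ in the Bowen metric, which is the standard route via a Brin--Katok type formula. I plan to bypass this by working only with finite partitions whose boundaries are contained in $D\cup I(D)$: since these sets are $\mu$-null, such partitions are admissible in the Kolmogorov--Sinai definition, and their $\psi$- and $\widetilde{\psi}$-refinements are measurably isomorphic modulo null sets, so the two Kolmogorov--Sinai entropies coincide. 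The Special Strong Tube condition, rather than the contraction hypothesis, provides the supply of small-diameter partitions of this form by serving as flow-box charts in a neighbourhood of $D\cup I(D)$.

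With the three correspondences in place, the classical variational principle for the continuous semiflow $\widetilde{\psi}$ on the compact metric space $\widetilde{M}$ delivers
$$ h^{\tau}_{top}(\psi)=h_{top}(\widetilde{\psi})=\sup_{\widetilde{\mu}} h_{\widetilde{\mu}}(\widetilde{\psi})=\sup_{\mu} h_{\mu}(\psi), $$
which is the conclusion of Theorem \ref{main theorem 2}.
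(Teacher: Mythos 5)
Your overall strategy --- pass to the quotient $\widetilde{M}=M/\!\sim$, show the induced semiflow is continuous there, transfer topological and measure-theoretic entropy across $\pi$, and invoke the classical variational principle --- is exactly the route of \cite{ACV}, and it is also what the paper does: it re-examines where \cite{ACV} uses the $1$-Lipschitz hypothesis on $I$ and supplies a substitute. The difficulty is that you have located that hypothesis in the wrong step. The identity $h_{\mu}(\psi)=h_{\pi_*\mu}(\widetilde{\psi})$, which you single out as ``the main obstacle'', needs no metric input at all: $\pi$ is injective on $M\setminus D$ (if $\pi(x)=\pi(y)$ with $x\neq y$, one of $x,y$ lies in $D$), and every $\psi$-invariant $\mu$ satisfies $\mu(D)=0$ simply because the impulsive trajectory never occupies $D$ at positive times (at the impulse instant it is already at the jumped point in $I(D)$), so $\psi_t^{-1}(D)=\emptyset$ for $t>0$ and invariance gives $\mu(D)=0$. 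Your stated reason --- that $D$ is wandering --- is false in general; the paper explicitly treats points of $\Omega_{\psi}\cap D$. In any case $\pi$ is then a measure-theoretic isomorphism mod $0$ and Kolmogorov--Sinai entropy is preserved automatically, so your null-boundary partition construction solves a non-problem.

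The place where \cite{ACV} genuinely uses the contraction hypothesis is their Lemma 4.1: the assertion that for all classes $[x],[y]$ there are representatives $p\in[x]$, $q\in[y]$ with $d(p,q)\leq\widetilde{d}([x],[y])$. This is precisely what is needed to pass between $(\widetilde{\psi},T,\varepsilon)$-separated sets in $\pi(\Omega_{\psi})$ and $(\psi,\tau,T,\varepsilon,\delta)$-separated sets in $\Omega_{\psi}$, i.e.\ to prove $h^{\tau}_{top}(\psi)=h_{top}(\widetilde{\psi})$ --- and it is the step you dispatch with ``the tubes make $\pi$ a local isometry elsewhere'', which is not an argument: a priori $\widetilde{d}([x],[y])$ could be much smaller than $d(x,y)$ whenever other representatives of the two classes happen to be close. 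The paper's fix is short but is the whole point of its Section 4: each equivalence class is a finite union of compact sets (Lemma \ref{classes}), so the infimum defining $\widetilde{d}$ is attained and the representative lemma holds with no Lipschitz assumption; the remaining hypotheses of the \cite{ACV} variational principle (continuity of $\tau^*$, the tube conditions, invariance of $\Omega_{\psi}\setminus D$) are supplied by the transversality results of Section 3, namely Propositions \ref{strong tube}, \ref{inv n.errante} and \ref{cont tau}. To close the gap you should replace your partition argument by this compactness observation (or some other substitute for \cite{ACV}'s Lemma 4.1) and route the entropy comparison through it.
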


We refer the reader to \cite{Viana} for the definition of measure theoretical entropy.

%
%



\section{The special strong tube condition}
 This section concerns the proof of Theorem \ref{main theorem 1}. The key concept  in the proof is the special strong tube condition.  We start with some  definitions. 

For $t \geq 0$ and $x \in  M$, we
define $F(t, x)= \{y \in M: \varphi(t, y) = x \}$
and, for $\Delta \subset [0,  \infty)$ and $Y \subset  M$, we define
$$
F( \Delta, Y) = \bigcup_{t \in \Delta, x \in Y} F( t, x).
$$
A point $x \in  M$ is
called an \emph{initial point}, if $F(t, x) = \emptyset$
for all $t>0$.

\begin{Def}\label{Tube} \rm
 A closed subset $S \subset M$ containing $x\in M$ is
called a {\it section} through $x$ if there exist $\lambda>0$ and a closed subset $L$ of $M$
such that:
\begin{itemize}
\item[{\bf (a)}] $F(\lambda, L)=S$;
\item[{\bf (b)}] $F([0,2\lambda], L)$ contains a neighborhood of $x$;
\item[{\bf (c)}] $F(\nu, L)\cap F(\zeta, L)=\emptyset$, if $0\leqslant \nu<\zeta\leqslant 2\lambda$.
\end{itemize}
The set $F([0,2\lambda], L)$ is called a $\lambda$-{\it tube} 
and the set $L$ is called a {\it bar}.
\end{Def}

\begin{Def}\label{SSC}
Let $(M,\varphi,D,I)$ be an IDS. We say that a point $x\in D$ satisfies the {\it strong tube
condition $($STC$)$}, if there exists a section $S$ through $x$ such that
$S= F([0,2\lambda], L)\cap D$. Furthermore, a point $x \in D$ satisfies the {\it special strong tube
condition $($SSTC$)$} if it satisfies STC and the  $\lambda$-tube $F([0,2\lambda], L)$ is such that $F( [0, \lambda], L) \cap I(D) = \emptyset$. 

If every point in $D$ satisfy  the SSTC, then we say that $D$ satisfies SSTC.
\end{Def}

\begin{Def}
Let $S$ be a codimension one submanifold of  a manifold $M$. We say that $S$ is transversal at $p\in S$ if $f(p)\notin T_pM$.  If $f(p) \notin T_pM $ for all $p\in S$, then $S$ is said to be transversal to the flow direction.  
\end{Def}

In the next proposition we prove that, under general topological conditions, the impulsive set and its image satisfy the  SSTC.

\begin{Prop}\label{strong tube}
If $D$ is a smooth compact submanifold of $M$  of codimension $1$ which is transversal to the direction of the  flow,  then  $D$ satisfies the special strong tube condition. 
\end{Prop}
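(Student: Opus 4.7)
The plan is to build each section required in Definition~\ref{Tube} directly from the flow box (straightening) theorem, whose hypothesis is precisely the transversality of $D$ to $\varphi$. Fix $x\in D$. Since $f(x)\notin T_xD$ and $\dim D=\dim M-1$, the map $\Phi(t,u):=\varphi_t(u)$ has invertible differential at $(0,x)\in\R\times D$, so the inverse function theorem (combined with local two-sided existence for $\dot x=f(x)$) yields an open neighborhood $U_0\subset D$ of $x$ and a $\lambda_0>0$ for which $\Phi\colon(-\lambda_0,\lambda_0)\times U_0\to M$ is a smooth diffeomorphism onto an open set $V\ni x$. By possibly further shrinking $U_0$ and $\lambda_0$ (using that $D\setminus U_0$ is closed in the compact set $D$ and misses the point $x$), we may also arrange that $V\cap D=U_0$.

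Next I would choose an open $U\subset U_0$ with $x\in U$ and $\overline U\subset U_0$, pick $\lambda\in(0,\lambda_0/2)$, and set $L:=\varphi_\lambda(\overline U)$ together with the candidate section $S:=\overline U$. The diffeomorphism $\Phi$ gives $F(t,L)=\Phi(\lambda-t,\overline U)$ for $t\in[0,2\lambda]$, from which conditions (a)--(c) of Definition~\ref{Tube} follow immediately: $F(\lambda,L)=S$; $F([0,2\lambda],L)=\Phi([-\lambda,\lambda]\times\overline U)$ contains the open neighborhood $\Phi((-\lambda,\lambda)\times U)$ of $x$; and the time-slice disjointness is just injectivity of $\Phi$. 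The STC identity $S=F([0,2\lambda],L)\cap D$ then follows from $V\cap D=U_0$: indeed $F([0,2\lambda],L)\subset V$, hence $F([0,2\lambda],L)\cap D=\Phi([-\lambda,\lambda]\times\overline U)\cap\Phi(\{0\}\times U_0)=\Phi(\{0\}\times\overline U)=\overline U=S$ by injectivity of $\Phi$.

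For the SSTC refinement, compactness of $D$ and continuity of $I$ make $I(D)$ compact, and combined with $I(D)\cap D=\emptyset$ this gives $\rho:=d(D,I(D))>0$. The half-tube $F([0,\lambda],L)=\Phi([0,\lambda]\times\overline U)$ is the continuous image of a small compact set, so shrinking $\lambda$ and $U$ places it inside the open $\rho$-neighborhood of $D$, forcing disjointness from $I(D)$. The statement for $I(D)$ is handled by the symmetric construction at each $y\in I(D)$: transversality of $I(D)$ to the flow (the standing hypothesis of Theorems~\ref{main theorem 1} and~\ref{main theorem 2}) produces an analogous flow box around $y$, and the same compactness estimate with the roles of $D$ and $I(D)$ swapped delivers the SSTC there. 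The main obstacle is the global intersection condition $V\cap D=U_0$, since the flow box theorem is purely local and one has to exclude ``other'' branches of $D$ from re-entering the same flow box; once this is secured via compactness of $D$, the remaining verifications are essentially bookkeeping driven by injectivity of $\Phi$ and by $\rho>0$.
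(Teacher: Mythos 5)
Your proof is correct, but it follows a genuinely different route from the paper. The paper works with a local defining submersion $\Phi\colon U\to\R$ with $U\cap D=\Phi^{-1}(c)$ (Lemma~\ref{Lee result 1}) and derives the tube axioms by hand: Rolle's theorem plus transversality to show trajectories meet $D$ only at isolated times near $x$, a sign-change (intermediate value) argument to show $F([0,2\lambda],L)$ contains a neighbourhood of $x$, and a contradiction argument for the disjointness of the time slices $F(\nu,L)$, all closely following the Euclidean argument of \cite{EMAR}. You instead apply the inverse function theorem to $(t,u)\mapsto\varphi_t(u)$ on $\R\times D$ to get a flow box, after which properties (a)--(c) of Definition~\ref{Tube} and the identity $S=F([0,2\lambda],L)\cap D$ all reduce to injectivity of the chart; this is cleaner and makes the ``neighbourhood of $x$'' property automatic rather than the object of a separate contradiction argument. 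Two caveats, both shared with (and no worse than) the paper's own argument: your identification $F(t,L)=\Phi(\lambda-t,\overline U)$ uses that $\varphi_t$ is globally injective and locally reversible, which is legitimate only because of the paper's standing reduction to the flow of a smooth vector field on the compact $M$ (for a genuine non-invertible semiflow, $F(t,L)$ could pick up preimages outside the flow box and the claim would fail); and the step $V\cap D=U_0$, which you correctly flag, does need the compactness argument you sketch to exclude other sheets of $D$. On the other hand, you explicitly handle the ``special'' half-tube condition via $d(D,I(D))>0$ and the symmetric construction for $I(D)$, whereas the paper's written proof stops after establishing STC for $D$, so on those points your argument is actually the more complete one.
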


Proposition \ref{strong tube} is an  extension of \cite[Theorem 9]{EMTR}. Here, we consider general manifolds in opposite to the subsets of Euclidean spaces considered in \cite{EMTR}.  We will  follow closely the ideas of the proof presented in \cite{EMTR}.
First we will need  to state a couple of lemmas from  Classical Differential Topology.   The reader may consult the proofs in \cite{Lee}.

\begin{Lema}\cite[Proposition 5.16]{Lee}\label{Lee result 1} 
Let $S$ be a subset of a smooth m-manifold $M$. Then $S$ is an embedded $k$-submanifold of $M$ if and only if every point of $S$ has a neighborhood $U$ in $M$ such that $U\cap S$ is a level set of a smooth submersion $\Phi\colon  U \mapsto \R^{m-k}$.
\end{Lema}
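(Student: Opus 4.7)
The plan is to prove both implications by reducing to the local slice characterization of embedded submanifolds, using the local normal form for a smooth submersion as the main analytic tool.

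For the forward direction, I would assume $S$ is an embedded $k$-submanifold of $M$ and invoke the local slice criterion: around each $p\in S$ there is a smooth chart $(U,\varphi)$ of $M$ with $\varphi(U)=V_1\times V_2\subset \R^{k}\times \R^{m-k}$ and $\varphi(U\cap S)=V_1\times\{0\}$. I would then define $\Phi:=\pi_2\circ\varphi:U\to\R^{m-k}$, where $\pi_2$ denotes projection onto the second factor. Since $\varphi$ is a diffeomorphism and $\pi_2$ is a linear submersion, $\Phi$ is a smooth submersion; by construction, $U\cap S=\Phi^{-1}(0)$, which is the desired level set.

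For the reverse direction, assume each $p\in S$ admits a neighborhood $U$ on which a smooth submersion $\Phi:U\to\R^{m-k}$ satisfies $U\cap S=\Phi^{-1}(c)$ for some $c\in\R^{m-k}$. The key step is to apply the local submersion theorem at $p$: since $D\Phi_p$ has full rank $m-k$, there exist smooth coordinates $(x^{1},\ldots,x^{m})$ centered at $p$ and coordinates $(y^{1},\ldots,y^{m-k})$ centered at $c$ in which $\Phi$ takes the canonical form $(x^{1},\ldots,x^{m})\mapsto(x^{k+1},\ldots,x^{m})$. In these straightened coordinates the level set $\Phi^{-1}(c)$ becomes exactly the $k$-slice $\{x^{k+1}=\cdots=x^{m}=0\}$, which is the local slice condition; hence $S$ is an embedded $k$-submanifold.

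The main obstacle is the appeal to the local submersion theorem, whose proof relies ultimately on the inverse function theorem through a suitable completion of the components of $\Phi$ to a full coordinate system. Once that normal form is available, both implications reduce to recognizing that "locally a level set of a submersion" and "locally a coordinate slice" are two manifestations of the same straightening, and the rest of the argument is bookkeeping with charts and the chain rule.
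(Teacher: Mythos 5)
Your proof is correct, and it is essentially the argument the paper relies on: the paper states this lemma without proof, deferring to \cite[Proposition 5.16]{Lee}, whose proof is exactly your combination of the local slice criterion (composing a slice chart with projection onto the last $m-k$ coordinates to get the submersion) and the submersion normal form via the rank theorem (to turn a level set back into a $k$-slice). The only cosmetic caveat is that in the converse direction one should note the straightened chart may need shrinking so that the level set meets the chart domain precisely in the slice, which your appeal to the local slice condition already handles implicitly.
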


\begin{Lema}\cite[Proposition 5.38]{Lee} \label{Lee result 2} 
Suppose $M$is a manifold and $S\subset M$ is an embedded submanifold. If $\Phi\colon  U \mapsto N$ is any local defining map for $S$, then $T_pS= {\rm Ker} d\Phi_p\colon  T_pM \mapsto T_{\Phi(p)}N $ for each $p\in S\cap U$.

\end{Lema}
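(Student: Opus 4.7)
The plan is to establish the equality $T_pS = \ker d\Phi_p$ by first proving the inclusion $T_pS \subseteq \ker d\Phi_p$ directly from the curve characterization of the tangent space, and then upgrading it to equality via a dimension count. Since both sides are linear subspaces of $T_pM$, this is the cleanest organisation.

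For the inclusion, I would use that tangent vectors to an embedded submanifold at $p$ are precisely velocities of smooth curves lying in $S$ through $p$. Given $v\in T_pS$, pick a smooth curve $\gamma\colon(-\epsilon,\epsilon)\to S$ with $\gamma(0)=p$ and $\gamma'(0)=v$. Shrinking $\epsilon$ if necessary, the image lies in $S\cap U$, and because $\Phi$ is a local defining map we have $S\cap U\subseteq \Phi^{-1}(\Phi(p))$. Thus $\Phi\circ\gamma$ is constant, and differentiating at $0$ gives $d\Phi_p(v)=(\Phi\circ\gamma)'(0)=0$, so $v\in\ker d\Phi_p$.

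To finish, I would count dimensions. Since $\Phi$ is a submersion, $d\Phi_p$ is surjective, so by rank–nullity $\dim\ker d\Phi_p=\dim M-\dim N$. On the other hand, the notion of a local defining map (as described in Lemma \ref{Lee result 1}) forces $\dim N=\dim M-k$, where $k=\dim S$, because $S\cap U$ is locally a regular level set of $\Phi$. Hence $\dim\ker d\Phi_p=k=\dim T_pS$, and combined with the inclusion this yields equality.

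The main (mild) obstacle is simply pinning down the codimension matching $\dim N=\dim M-\dim S$; once that is in hand the proof is mechanical. If one wishes to avoid the two-step argument altogether, a clean alternative is to invoke the submersion normal form: near $p$ there are coordinates $(x^1,\dots,x^m)$ on $M$ and coordinates on $N$ in which $\Phi$ becomes $(x^1,\dots,x^m)\mapsto (x^1,\dots,x^{m-k})$. Then $S\cap U$ is cut out by $x^1=\dots=x^{m-k}=0$, its tangent space at $p$ is spanned by $\partial/\partial x^{m-k+1},\dots,\partial/\partial x^m$, and this coincides verbatim with $\ker d\Phi_p$, establishing the equality in one stroke.
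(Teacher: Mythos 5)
Your proof is correct; the paper does not prove this lemma itself but quotes it verbatim from Lee (Proposition 5.38), and your two-step argument --- differentiating $\Phi\circ\gamma$ along curves in $S$ to get $T_pS\subseteq\ker d\Phi_p$, then rank--nullity together with the regular level set theorem forcing $\dim N=\dim M-\dim S$ --- is precisely the standard proof given in that reference. The alternative you sketch via the submersion normal form is equally valid and settles the equality in one step.
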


Now we can proceed to the proof of Proposition \ref{strong tube}. 

\begin{proof}[Proof of Proposition~\ref{strong tube}]

Let $x \in D$.  By Lemma \ref{Lee result 1},  there exists a neighborhood $U$ in $M$, with $x \in U\cap D$, such that $U\cap D$ is a level set of a smooth submersion $\Phi\colon U \mapsto \R$, that is, there exists $c \in \R$ such that $U\cap D = \Phi^{-1}(c)$.

\vspace{.2cm}

\textbf{Claim 1:} There exists $\epsilon_x > 0$ such that
$\varphi((0, \epsilon_x),x) \cap D= \emptyset$ and $F((0, \epsilon_x), x)\cap D = \emptyset$.

Indeed, suppose by contradiction that there exists a sequence of positive real numbers $\{t_n\}_{n\in\mathbb{N}}$ with $t_n \nto 0$ such that $\varphi(t_n, x) \in D$, $n\in\N$. Let $\delta > 0$ be such that $\varphi([0, \delta],x) \subset U$. We may assume that 
 $t_n \in [0, \delta]$ for every $n \in \N$. Define $h\colon [0, \delta] \to \mathbb{R}$ by $h(t) = \Phi(\varphi(t,x)) - c$. Note that $h(0) = h(t_n) = 0$ for all $n\in\N$ and, by the Rolle Theorem, there exists $\tau_n \in (0,t_n)$ such that
$$h'(\tau_n)=\nabla \Phi(\varphi(\tau_n,x))f(\varphi(\tau_n,x))=0, \; n\in\N.$$
As $n\to +\infty$, we obtain $\nabla \Phi(x)f(x)=0$. By Lemma \ref{Lee result 2}, we conclude that $f(x) \in T_xD$ which is a contradiction since $D$ is transversal to the flow direction. 

Analogously, if there exists a sequence $\{t_n\}_{n\in\mathbb{N}}\subset \R_+$,
 $t_n \nto 0$, such that $F(t_n,x)\cap D \neq \emptyset$ for all $n\in\N$, then we can also assume that there exists $\delta > 0$ such that $F([0, \delta],x) \subset U$ and $t_n<\delta$ for all $n\in\N$. For each $n\in \N$, let $y_n\in F(t_n,x)\cap D$ and define the mapping $h_n\colon [0,t_n] \to \mathbb{R}$ by $h_n(t) = \Phi(\varphi(t, y_n)) - c$. Using again the Rolle Theorem we obtain $\tau_n \in (0, t_n)$ with $h_n'(\tau_n) = 0$, $n\in\N$,
 that yields $\nabla \Phi(x)f(x)=0$ which is a contradiction.

 \vspace{.2cm}
 
 \textbf{Claim 2:} If $t>0$ is such that $\varphi((0,t],x)\subset U\backslash D$ and $\varphi( [-t,0), x) \subset U\backslash D$ then $(\Phi(\varphi( t, x))-c)(\Phi(\varphi( -t, x))-c)<0$.

Indeed, note that $(\Phi(\varphi( t, x))-c)(\Phi(\varphi(s,x))-c)>0$ for all $0<s\leqslant t$, because if there exists $0<s < t$ such that $$(\Phi(\varphi( t, x))-c)(\Phi(\varphi( s, x))-c)<0,$$
then one can obtain $\tau\in (s, t)$ such that $\Phi(\varphi(\tau, x))=c$, that is, $\varphi( \tau, x)\in D$ which is a contradiction. Analogously, we have $$(\Phi(\varphi(-t, x))-c)(\Phi(\varphi(-s, x))-c)>0 \quad \mbox{for all}\;\; 0 < s \leqslant t.$$

Now, suppose the opposite, i.e., that $(\Phi(\varphi(x,t))-c)(\Phi(\varphi(x, -t))-c)>0$. Then, either $\Phi(\varphi(x, [-t,0)\cup(0,t]))> c$ or $\Phi(x, \varphi([-t,0)\cup(0,t]))<c$ which implies that $\nabla\Phi(x)f(x)=0$ and gives also a contradiction.

 \vspace{.2cm}
 
 \textbf{Claim 3:} $D$ satisfies SSTC.

We may assume that $U\cap D= \Phi^{-1}(c)$ is connected. Using the same argument as in \textbf{Claim 1}, we may obtain a compact set $S \subset U\cap D$, $x \in S$, and $\lambda>0$ be such that $\varphi((0, 2\lambda], S) \subset U\backslash D$ and $F((0, 2\lambda], S) \subset U\backslash D$.  Since $D$ is compact and $D \cap I(D) = \emptyset$, we may choose $\lambda > 0$ such that $\varphi((0, 2\lambda], S) \cap I(D) = \emptyset$. Define $L=\varphi( \lambda, S)$. Note that $F([0,\lambda],L) \cap I(D) = \emptyset$.
 We are going to show that $F([0,2\lambda], L)$ is a $\lambda$-tube through $x$.

It is clear that $F( \lambda, L) = S$. Now, suppose by contradiction that the tube $F([0,2\lambda],L)$ does not contain a neighborhood of $x$, i.e., there exists a sequence $x_n\nto x$ such that $x_n\notin F([0,2\lambda],L)$ for all $n\in\N$. 

\textbf{Claim 2} implies that there exists $0<s<\lambda$ such that
\[(\Phi(\varphi( s, x))-c)(\Phi(\varphi( -s, x))-c)<0.\] Therefore, there is $n_0\in\N$ so that \[\varphi( [-s,s], x_n)\subset U\quad \textrm{and}\quad (\Phi(\varphi( s, x_n))-c)(\Phi(\varphi( -s, x_n))-c)<0\] for all $n\geq n_0$. 

Thus, for $n\geq n_0$, there is $\tau_n\in(-s,s)$ such that $\Phi(\varphi( \tau_n, x_n))=c$, that is, $\varphi(\tau_n, x_n)\in D$, which gives a contradiction.

To finish the proof we show that $F( \alpha, L)\cap F( \beta, L)=\emptyset$ for  $0\leqslant \alpha < \beta \leqslant 2\lambda$. Suppose, again by contradiction, that there exists $y\in F( \alpha, L)\cap F(\beta, L)$ for some $0\leqslant \alpha < \beta \leqslant 2\lambda$. Then $a = \varphi(y, \alpha-\lambda)\in D$ and $b = \varphi(y, \beta-\lambda)\in D$. Consequently, $\varphi(a, \beta-\alpha)=b \in D$ and this is a contradiction since $a\in D$ and $\varphi(S, (0, 2\lambda]) \cap D = \emptyset$. This completes the proof.
\end{proof}

The special strong tube condition provides an useful property to the impulsive semiflow. 

\begin{Prop}\cite[Proposition 2.6]{EMAR}\label{Prof aux 1} Let $(M,\varphi,D,I)$ be an IDS such that $I(D) \cap D=\emptyset$ and assume that $ D$ satisfies SSTC with $\lambda$-tube $F([0,2\lambda], L)$. Then $\psi_t\left(M) \cap  F([0,\lambda], L\right) = \emptyset$ for $t > \lambda$.
\end{Prop}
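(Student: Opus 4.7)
The approach rests on two ingredients: the uniqueness of passage through the $\lambda$-tube $F([0,2\lambda], L)$ (condition (c) of Definition~\ref{Tube}) and the SSTC disjointness $F([0,\lambda], L) \cap I(D) = \emptyset$. First I would identify the starting set: by STC we have $D \cap F([0,2\lambda], L) = S = F(\lambda, L)$, and by (c) the slices $F(s, L)$ are pairwise disjoint, so $D \cap F([0,\lambda], L) = F(\lambda, L) = S$. The problem therefore reduces to understanding the impulsive trajectories issuing from points of the section $S$.

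For $x \in S$ the tube parametrisation gives $\varphi_t(x) \in F(\lambda - t, L)$ for $0 \le t \le \lambda$; by (c) this slice meets $D$ only at $t = 0$. Hence $\tau_1(x) > \lambda$, so $\psi_t(x) = \varphi_t(x)$ on $[0, \tau_1(x))$ and the trajectory leaves the tube through $L$ exactly at time $\lambda$. For $\lambda < t < \tau_1(x)$, uniqueness (c) prevents re-entry into $F([0,2\lambda], L)$, and a fortiori into $F([0,\lambda], L)$. At $t = \tau_1(x)$ the impulse places the trajectory at $I(\varphi_{\tau_1(x)}(x)) \in I(D)$, disjoint from $F([0,\lambda], L)$ by SSTC.

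For $t > \tau_1(x)$ I would iterate this analysis. Between consecutive impulsive times $\tau_n < t < \tau_{n+1}$ the trajectory equals $\varphi_{t-\tau_n}(\gamma_x(\tau_n))$, starting from a point of $I(D)$; any re-entry into $F([0,\lambda], L)$ during this interval would require the underlying $\varphi$-orbit to cross the section $S$ first (tube uniqueness forces the traversal order $F(2\lambda,L) \to S \to L$), but such a crossing would force the next impulsive time to occur at or before that moment, contradicting $t < \tau_{n+1}$. At each impulsive time $\tau_{n+1}$ the trajectory is again in $I(D)$, which SSTC keeps outside $F([0,\lambda], L)$. The conclusion therefore propagates for all $t > \lambda$.

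The main obstacle is precisely this non-return step: condition (c) only controls the behaviour inside the tube, and the global $\varphi$-flow is free to recur near $F([0,\lambda], L)$. The crucial observation that unlocks the argument is that any $\varphi$-passage through the right half $F([0,\lambda], L)$ must cross $S$ beforehand, and the impulsive dynamics convert such a crossing into a jump into $I(D)$, which SSTC places outside $F([0,\lambda], L)$.
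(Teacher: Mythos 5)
The paper gives no proof of this proposition: it is imported verbatim from \cite[Proposition 2.6]{EMAR}, so your attempt can only be compared with the standard argument from that source. Note also that the statement as printed is garbled (for a nonempty set $A$ one cannot have $\psi_t(A)=\emptyset$); the version actually invoked later in the paper, to place $\psi(t_n,z_n)$ in $F((\lambda,2\lambda],L)$, is that $\psi_t(z)\notin F([0,\lambda],L)$ for \emph{every} $z\in M$ and every $t>\lambda$. Your central mechanism is the correct one and is the one used in \cite{EMAR}: if a $\varphi$-segment of an impulsive trajectory, starting at a point $w_0$ (either the initial point or a point of $I(D)$) and running for time $u$, lands in a slice $F(s,L)$ with $s<\lambda$, then $w_0\in F(u+s,L)$; when $u+s\le\lambda$ this puts $w_0\in F([0,\lambda],L)$, impossible for $w_0\in I(D)$ by SSTC (and forcing $t\le\lambda$ when $w_0$ is the initial point), while when $u+s>\lambda$ the point $\varphi_{u+s-\lambda}(w_0)\in F(\lambda,L)=S\subset D$ lies strictly inside the segment, contradicting the definition of the next impulsive time; and at the impulsive times themselves the trajectory sits in $I(D)$, which SSTC keeps out of $F([0,\lambda],L)$.

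Two points need repair. First, the sentence ``for $\lambda<t<\tau_1(x)$, uniqueness (c) prevents re-entry into $F([0,2\lambda],L)$'' is false: condition (c) of Definition~\ref{Tube} only asserts that the slices $F(\nu,L)$ are pairwise disjoint and says nothing about the orbit returning to the tube; a recurrent orbit may perfectly well re-enter through the back half $F((\lambda,2\lambda],L)$. Only the front half is forbidden, and for the reason you give in your final paragraph (a visit to $F(s,L)$ with $s<\lambda$ at time $t>\lambda$ forces a crossing of $S\subset D$ at the earlier time $t-(\lambda-s)>0$), not as a consequence of (c); the false claim should be deleted and that computation run uniformly on each inter-impulse interval. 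Second, you restrict attention to initial points in $S=D\cap F([0,\lambda],L)$, whereas the application in the lemma that follows the proposition needs the conclusion for arbitrary initial points $z_n$; your argument is insensitive to the starting point (decompose $[0,t]$ by the impulsive times $\tau_n(z)$ and apply the slice computation to each $\varphi$-segment, whose left endpoint is either $z$ itself or a point of $I(D)$), so this is only a matter of stating and proving the claim at the right level of generality.
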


We recall that given a semiflow $\phi\colon  M\times \R_0^+ \to M$, not necessarily continuous, a point   $x \in M$ is said to be \textbf{non-wandering}  if, for every neighborhood $U$ of $x$ and any $T>0$, there exists $t \geq T$ such that 
\[
\phi^{-1}_t(U) \cap U \neq \emptyset.
\]

Let $\Omega_{\phi} = \{x \in X\colon \, x \; \text{is non-wandering for} \; \phi\}$ be the non-wandering set. The compactness of $M$ implies that the non-wandering set is also compact. In the case that the semiflow is continuous, the non-wandering set is also invariant under the semiflow, that is, 
$$\phi_t({\Omega_{\phi}}) \subset \Omega_{\phi} \quad  \text{for all} \ t\geq 0.$$

The invariance of the non-wandering set is not true in general. However, the following characterization for an impulsive semiflow allows us to obtain an invariance outside of the impulsive set. 

First, for $x\in M$, the set
\[
J(x)= \{y\in M\colon \, \text{there are sequences} \; \; \; \{x_n\}_{n\in \N} \subset M \; \; \; \text{and} \; \; \; \{t_n\}_{n\in \N}\subset\R_0^+
\]
\[
\text{such that} \; \; \; x_n\stackrel{n \rightarrow +\infty}{\longrightarrow} x, \; \; \; t_n \stackrel{n \rightarrow +\infty}{\longrightarrow} +\infty \; \; \; \text{and} \; \; \; \psi(t_n, x_n)\stackrel{n \rightarrow +\infty}{\longrightarrow} y\}
\]
denotes the
prolongation of the positive limit set of $x$ with respect to $\psi$.

\begin{Lema}\cite[Theorem 3.5]{EF}\label{nao errante} Let $(M, \varphi, D, I)$ be an IDS and let $\psi$ be the impulsive semiflow.  A point $x \in \Omega_{\psi}$ if and only if $x \in J(x)$.
\end{Lema}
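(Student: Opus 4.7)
The plan is to unpack both definitions and observe that the non-wandering condition and the self-prolongation condition $x\in J(x)$ are two reformulations of the same recurrence phenomenon: one phrased via open neighborhoods, the other via convergent sequences. No continuity of $\psi$ is needed because the argument only uses set-theoretic preimages $\psi_t^{-1}(U)$, which are well defined regardless of whether $\psi$ is continuous.

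For the direction $x\in J(x)\Rightarrow x\in\Omega_\psi$, I would fix sequences $x_n\to x$ and $t_n\to\infty$ with $\psi_{t_n}(x_n)\to x$, and then for an arbitrary neighborhood $U$ of $x$ and any $T>0$, use that for all sufficiently large $n$ one has simultaneously $x_n\in U$, $\psi_{t_n}(x_n)\in U$ and $t_n\geq T$. The point $x_n$ then witnesses $U\cap\psi_{t_n}^{-1}(U)\neq\emptyset$, so $x$ is non-wandering.

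For the converse direction $x\in\Omega_\psi\Rightarrow x\in J(x)$, I would apply the non-wandering property to the shrinking family of metric balls $U_n=B(x,1/n)$ with thresholds $T_n=n$. For each $n$ this produces a time $t_n\geq n$ and a point $x_n\in U_n\cap\psi_{t_n}^{-1}(U_n)$. By construction $x_n\to x$, $t_n\to\infty$, and $\psi_{t_n}(x_n)\in U_n$ forces $\psi_{t_n}(x_n)\to x$; hence the triple $(x_n,t_n,\psi_{t_n}(x_n))$ realises $x$ as an element of $J(x)$.

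The only subtle point I anticipate is ensuring that the interpretation of $\psi_{t_n}^{-1}(U)$ in the definition of $\Omega_\psi$ coincides with the condition $\psi_{t_n}(x_n)\in U$ that feeds into the definition of $J(x)$; since both are set-theoretic, this is immediate and does not require any hypothesis on the impulsive map $I$, on the impulsive set $D$, or on continuity of $\psi$. The proof is thus a formal unwinding of the two definitions, essentially the classical argument for flows adapted verbatim to the possibly discontinuous semiflow $\psi$.
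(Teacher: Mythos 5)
Your argument is correct and complete: both directions are the standard unwinding of the two definitions (take $x_n\in U\cap\psi_{t_n}^{-1}(U)$ for large $n$ in one direction, and apply the non-wandering property to $U_n=B(x,1/n)$ with $T_n=n$ in the other), and you are right that no continuity of $\psi$ and no hypothesis on $D$ or $I$ is used. The paper itself gives no proof --- it imports the statement from \cite[Theorem 3.5]{EF} --- so there is nothing to compare against beyond noting that your definition-chasing argument is exactly the classical one and that it goes through verbatim for the (possibly discontinuous) impulsive semiflow.
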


\begin{Lema}\label{property of I} Let $(M, \varphi, D, I)$ be an impulsive system  
such that each point  on $D$ satisfies the SSTC. If $x \in \Omega_{\psi} \cap D$ then $I(x) \in \Omega_{\psi}$.
\end{Lema}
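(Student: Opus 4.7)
The plan is to apply Lemma~\ref{nao errante} twice: first to unfold the hypothesis $x\in\Omega_\psi$ as $x\in J(x)$, and then to re-encode the target $I(x)\in\Omega_\psi$ as $I(x)\in J(I(x))$. The problem therefore reduces to converting a sequence witnessing prolongational recurrence at $x$ into one witnessing prolongational recurrence at $I(x)$, and for this the SSTC provided by Proposition~\ref{strong tube} will be the crucial structural tool.

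First I would take sequences $y_n\to x$, $t_n\to +\infty$ with $\psi(t_n,y_n)\to x$. By Proposition~\ref{strong tube} there is a $\lambda$-tube $F([0,2\lambda],L)$ through $x=\varphi(\lambda,y_\ast)$ with $D\cap F([0,2\lambda],L)=F(\lambda,L)$ and $I(D)\cap F([0,\lambda],L)=\emptyset$. For $n$ large both $y_n$ and $\psi(t_n,y_n)$ lie in this open neighborhood of $x$, hence admit unique tube coordinates $y_n=\varphi(s'_n,z'_n)$ and $\psi(t_n,y_n)=\varphi(s_n,z_n)$ with $s_n,s'_n\to\lambda$ and $z_n,z'_n\to y_\ast$. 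The key preparatory step is to show $s_n<\lambda$, i.e.\ that $\psi(t_n,y_n)$ lies strictly before $D$ along the flow direction. If instead $s_n\geq\lambda$, I would trace the $\varphi$-piece of the impulsive orbit containing $\psi(t_n,y_n)$ back to the preceding post-impulse point $w_n\in I(D)$ and note that the tube arithmetic forces either $w_n\in I(D)\cap F([0,\lambda],L)$, which is forbidden by SSTC, or $w_n\to x\in D$, forbidden by the compactness of $I(D)$ together with $I(D)\cap D=\emptyset$. A parallel argument (passing to a subsequence or, if necessary, to a small shift of $y_n$ compatible with the openness in the definition of the non-wandering set) yields $s'_n<\lambda$ as well.

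Once both $s_n$ and $s'_n$ belong to $[0,\lambda)$, the first impulsive time of $y_n$ is $\lambda-s'_n\to 0^+$ and the next impulsive time of $\psi(t_n,y_n)$ is $\lambda-s_n\to 0^+$, so that the corresponding post-impulse positions
\[
u_n:=\psi(\lambda-s'_n,y_n)=I(\varphi(\lambda,z'_n))\quad\text{and}\quad v_n:=\psi(t_n+\lambda-s_n,y_n)=I(\varphi(\lambda,z_n))
\]
both converge to $I(\varphi(\lambda,y_\ast))=I(x)$ by continuity of $I$ and $\varphi$. The semigroup property of $\psi$, applied on the non-impulsive interval joining $u_n$ to $v_n$, gives $\psi(T_n,u_n)=v_n$ with $T_n=t_n+s'_n-s_n\to +\infty$. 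Hence $u_n\to I(x)$, $T_n\to+\infty$ and $\psi(T_n,u_n)\to I(x)$, which is exactly $I(x)\in J(I(x))$; a second application of Lemma~\ref{nao errante} then delivers $I(x)\in\Omega_\psi$.

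The main obstacle is the side-of-$D$ analysis in the previous paragraph: ruling out $s_n>\lambda$ (and the analogous $s'_n>\lambda$) requires combining the SSTC exclusion $I(D)\cap F([0,\lambda],L)=\emptyset$ with the compactness of $I(D)$ and its disjointness from $D$, while carefully tracking how consecutive $\varphi$-pieces of a $\psi$-orbit may enter, stay inside, or exit the tube. Once this topological bookkeeping is in place, the construction of $(u_n,T_n)$ through the impulsive jumps is essentially formal.
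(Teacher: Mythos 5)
Your treatment of the returning points is exactly the paper's argument: Lemma~\ref{nao errante} produces $y_n\to x$, $t_n\to\infty$ with $\psi(t_n,y_n)\to x$, and the SSTC (via Proposition~\ref{Prof aux 1}) forces $\psi(t_n,y_n)$ into the incoming half $F((\lambda,2\lambda],L)$ of the tube, whence $\tau_1(\psi(t_n,y_n))\to 0$ and $\psi\bigl(t_n+\tau_1(\psi(t_n,y_n)),y_n\bigr)\to I(x)$. The paper stops there and concludes $I(x)\in\Omega_\psi$. You go further and also try to relocate the \emph{base} points to a sequence $u_n\to I(x)$, so as to verify $I(x)\in J(I(x))$ literally before invoking Lemma~\ref{nao errante} a second time; that extra care is reasonable (the paper's displayed convergence has base points tending to $x$, not to $I(x)$), but it is precisely where your argument has a gap.

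The gap is the claim $s'_n<\lambda$, i.e.\ that the starting points $y_n$ lie on the incoming half of the tube. The ``parallel argument'' does not apply to them: the SSTC exclusion $I(D)\cap F([0,\lambda],L)=\emptyset$ rules out the outgoing half $F([0,\lambda),L)$ only for points of the form $\psi(t,z)$ with $t$ large, because the $\varphi$-piece of such a point would have to originate at a post-impulse point inside $F([0,\lambda],L)$. The $y_n$ are arbitrary points converging to $x$ and are subject to no such constraint; they may well lie in $F([0,\lambda),L)$, just past $D$, in which case $\tau_1(y_n)$ stays bounded away from $0$ and your $u_n=\psi(\tau_1(y_n),y_n)$ does not converge to $I(x)$. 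The fallback of ``a small shift of $y_n$'' is not substantiated and is delicate: a point on the outgoing side is not $\psi_s$ of a nearby incoming point for small $s>0$, since crossing $D$ triggers a jump to $I(D)$, which is far from $x$. You need either to justify that the witnessing sequences for $x\in J(x)$ can be chosen on the incoming side, or to drop the base-point relocation and argue, as the paper does, directly from the convergence $\psi(t_n+\tau_1(\psi(t_n,y_n)),y_n)\to I(x)$.
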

\begin{proof}
 Let $x \in \Omega_{\psi} \cap D$. By  Lemma~\ref{nao errante} there are sequences $\{z_n\}_{n \in \mathbb{N}} \subset M$ and $\{t_n\}_{n \in \mathbb{N}}\subset \mathbb{R}_0^+$ such that $z_n \stackrel{n \rightarrow +\infty}{\longrightarrow} x$, $t_n \stackrel{n \rightarrow +\infty}{\longrightarrow} \infty$ and 
\[
w_n = \psi(t_n,z_n) \stackrel{n \rightarrow +\infty}{\longrightarrow} x.
\]
Since $D$ satisfies SSTC, it follows from Proposition \ref{Prof aux 1} that 
\[
\psi(t_n, z_n) \in F( (\lambda, 2\lambda], L) \quad  \mbox{for $n$ sufficiently large.}
\]
 Consequently, $\tau_1(\psi(t_n,z_n)) \stackrel{n \rightarrow +\infty}{\longrightarrow} 0$ and
 \[
\psi( t_n + \tau_1(\psi(t_n,z_n)), z_n) \stackrel{n \rightarrow +\infty}{\longrightarrow} I(x).
 \]
 Hence, $I(x) \in \Omega_{\psi}$.
\end{proof}

Since we are assuming that $I(D)\cap D=\emptyset$, it follows by Lemma \ref{property of I} that $I(\Omega_{\psi}\cap D) \subset \Omega_{\psi}\backslash D$.  Thus \cite[Theorem B]{AC} can be rewritten as follows. 

\begin{Prop}\label{inv n.errante}
Given the impulsive semiflow $\psi$  generated by the IDS $(M,\varphi, D, I)$ we have 
$\psi_t(\Omega_{\psi} \backslash D ) \subset \Omega_{\psi} \backslash D$ for all $t \geq 0$.
\end{Prop}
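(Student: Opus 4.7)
The plan is to derive the conclusion as a direct corollary of \cite[Theorem B]{AC} combined with the preceding lemma. I would assemble two ingredients.

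The first ingredient is the elementary observation that any impulsive trajectory starting outside $D$ never enters $D$. If $x\in M\setminus D$, then on the interval $[0,\tau_1(x))$ the trajectory coincides with the $\varphi$-orbit of $x$, which avoids $D$ by definition of $\tau_1(x)$; at the instant $\tau_1(x)$ the trajectory jumps to $I(\varphi_{\tau_1(x)}(x))\in I(D)\subset M\setminus D$, using the standing hypothesis $I(D)\cap D=\varnothing$. An easy induction over the successive impulsive times $\tau_i(x)$ then shows that $\gamma_x(s)\notin D$ for all $s\ge 0$, whence $\psi_t(M\setminus D)\subset M\setminus D$ for every $t\ge 0$.

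The second ingredient is \cite[Theorem B]{AC}, which provides a ``quasi-invariance'' of the non-wandering set, of the form $\psi_t(\Omega_\psi)\subset \Omega_\psi\cup I(\Omega_\psi\cap D)$ for every $t\ge 0$. The preceding lemma already yields $I(\Omega_\psi\cap D)\subset \Omega_\psi$, and in fact $I(\Omega_\psi\cap D)\subset \Omega_\psi\setminus D$ since $I(D)\cap D=\varnothing$. Feeding this back into the Alves--Carvalho inclusion upgrades the quasi-invariance to genuine invariance $\psi_t(\Omega_\psi)\subset \Omega_\psi$.

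Combining both ingredients, for any $x\in \Omega_\psi\setminus D$ and any $t>0$ the second gives $\psi_t(x)\in \Omega_\psi$ while the first gives $\psi_t(x)\notin D$, so $\psi_t(x)\in \Omega_\psi\setminus D$, as required. The only delicate point I foresee is aligning the precise statement of \cite[Theorem B]{AC} with the form used above; however, since the authors themselves present this proposition as a mere rewriting of that theorem, I expect the alignment to be essentially automatic once the preceding lemma is in hand.
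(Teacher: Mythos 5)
Your proposal is correct and follows essentially the same route as the paper: the authors give no separate proof, simply noting that the preceding lemma together with $I(D)\cap D=\emptyset$ yields $I(\Omega_{\psi}\cap D)\subset \Omega_{\psi}\setminus D$, which is exactly the hypothesis needed to invoke \cite[Theorem B]{AC} and read off the stated invariance. Your extra first ingredient (forward invariance of $M\setminus D$ under $\psi$) is correct and harmless, though the cited theorem already delivers the conclusion in the form $\psi_t(\Omega_{\psi}\setminus D)\subset \Omega_{\psi}\setminus D$ once that inclusion is verified.
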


Proposition \ref{inv n.errante} establishes that outside from the impulsive set $D$ the non-wandering set is invariant under the semiflow.

Given  $\xi>0$  we define
\begin{equation*}\label{eq.dxi}
D_\xi=\bigcup_{x\in D }\{\varphi_t(x) \colon  0<t<\xi\} \quad \mbox{and}  \quad  X_\xi=M\backslash (D_\xi\cup D).
\end{equation*}

Since $D$ is compact, $I$ is continuous and $I(D)\cap D=\emptyset$, it follows for a sufficiently small $\xi$ that $I(D)\cap D_\xi=\emptyset$. Therefore,   the set  $X_\xi$ is  invariant under $\psi$, that is,
\begin{equation*}\label{eq.forinva}
\psi_t(X_\xi) \subseteq X_\xi,\quad \mbox{for all} \ t \geq 0.
\end{equation*}
We consider the function
$$\tau^*_\xi\colon X_\xi \cup D \to [0,+\infty]$$
defined as
\begin{equation}\label{function tau star}
\tau^*_\xi(x)=
\begin{cases}
\tau_1(x), &\text{if $x\in X_\xi$};\\
0, &\text{if $x\in D$}.
\end{cases}
\end{equation}

\begin{Prop}\label{cont tau} Assume that $ D$ satisfies STC. The function $\tau^*_\xi$ is continuous on $X_\xi \cup D$. 
\end{Prop}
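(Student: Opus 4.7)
The plan is to split into two cases according to whether the point at which continuity is tested lies in $D$ or in $X_\xi$, with the tube structure around $D$ supplied by Proposition~\ref{strong tube} doing the geometric work in both.

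First, for continuity at $x\in D$, I would take a sequence $y_n\to x$ with $y_n\in X_\xi\cup D$ and show $\tau^*_\xi(y_n)\to 0$. The terms with $y_n\in D$ contribute $0$, so only $y_n\in X_\xi$ requires attention. Invoking Proposition~\ref{strong tube} at $x$ furnishes a $\lambda$-tube $F([0,2\lambda],L)$ with middle slice $S=F(\lambda,L)\subset D$ containing $x$. For $n$ large, $y_n$ sits in the neighborhood of $x$ enclosed by the tube, and by the disjointness clause of Definition~\ref{Tube} a unique parameter $\nu_n\in[0,2\lambda]$ with $y_n\in F(\nu_n,L)$ is assigned continuously, so $\nu_n\to\lambda$. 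Local reversibility of the smooth flow lets me write $y_n=\varphi_{\lambda-\nu_n}(q_n)$ with $q_n\in S\subset D$ (negative times interpreted as backward flow). The constraint $y_n\in X_\xi$ excludes $\nu_n\in(\lambda-\xi,\lambda]$, since those parameters correspond to points of $D$ or $D_\xi$; hence eventually $\nu_n>\lambda$, so $\varphi_{\nu_n-\lambda}(y_n)=q_n\in D$, and the disjointness of slices identifies this as the first return, giving $\tau_1(y_n)=\nu_n-\lambda\to 0$.

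Second, for continuity at $x\in X_\xi$, I would use closedness of $D$ to reduce to $y_n\in X_\xi$ and establish $\tau_1(y_n)\to\tau_1(x)$ in $[0,+\infty]$ via both semicontinuity directions. Lower semicontinuity is nearly formal: if a subsequence has $\tau_1(y_{n_k})\to s<\infty$, continuity of $\varphi$ and closedness of $D$ place $\varphi_s(x)=\lim\varphi_{\tau_1(y_{n_k})}(y_{n_k})$ in $D$, so $\tau_1(x)\le s$ and $\tau_1(x)\le\liminf\tau_1(y_n)$. This settles the case $\tau_1(x)=+\infty$. For $T:=\tau_1(x)<\infty$, I apply Proposition~\ref{strong tube} at $\varphi_T(x)\in D$ to get a tube whose slice parameter $\nu_n'$ assigned to $\varphi_T(y_n)$ tends to $\lambda$. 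Writing $\varphi_T(y_n)=\varphi_{\lambda-\nu_n'}(q_n')$ with $q_n'\in D$ yields $\varphi_{T+(\nu_n'-\lambda)}(y_n)=q_n'\in D$ once the exponent is positive (as is the case eventually, since $T>0$), so $\tau_1(y_n)\le T+(\nu_n'-\lambda)\to T$; combined with the lower bound, $\tau_1(y_n)\to T$.

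The hard step throughout is the upper bound on $\tau_1(y_n)$: a priori, perturbed trajectories need not intersect $D$ near where the unperturbed one does, so $\tau_1(y_n)$ could be much larger than $\tau_1(x)$ or even infinite. Proposition~\ref{strong tube} is precisely what controls this, pinning every sufficiently close crossing time to within $o(1)$ of the nominal one; the same mechanism drives the identification $\tau_1(y_n)=\nu_n-\lambda$ in the first case.
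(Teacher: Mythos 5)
Your argument is correct, and for continuity at points of $D$ it is essentially the paper's proof: both place the nearby points $y_n$ in a tube through $x\in D$ furnished by Proposition~\ref{strong tube}, show the slice parameter converges to $\lambda$, and read off $\tau_1(y_n)\to 0$. (The paper gets the convergence of the parameter by a subsequence--contradiction argument using the $\epsilon_x$ from Claim 1 of that proposition and the choice $\lambda<\epsilon_x$, whereas you assert continuity of the slice assignment; that continuity does hold, by closedness of $L$ together with disjointness of the slices, so this is only a presentational difference. Your exclusion of $\nu_n\in(\lambda-\xi,\lambda]$ via $y_n\in X_\xi$ is in fact a cleaner justification of the paper's unexplained assertion that $x_n\in F([\lambda,2\lambda],L)$.) Where you genuinely diverge is on $X_\xi$: the paper simply cites Ciesielski's semicontinuity theorem for continuity of $\tau_1$ there, while you give a self-contained proof --- lower semicontinuity from closedness of $D$, upper semicontinuity from a tube at $\varphi_{\tau_1(x)}(x)$ --- which makes the argument independent of that reference at the cost of a little length. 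One small over-claim: ``the disjointness of slices identifies this as the first return, giving $\tau_1(y_n)=\nu_n-\lambda$'' is not quite right as stated --- exact equality requires the STC property that the whole tube meets $D$ only in the central slice $S$, not merely that distinct slices are disjoint --- but since only the inequality $\tau_1(y_n)\le\nu_n-\lambda$ is used, the conclusion is unaffected.
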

\begin{proof} According to \cite[Theorem 3.8]{Ciesielski}, $\tau^*_\xi$ is continuous on $X_\xi$. Now, let $x \in D$ and $\{x_n\}_{n \in \N}\subset X_\xi \cup D$ be a sequence such that $x_n \stackrel{n \rightarrow +\infty}{\longrightarrow}  x$.

 Since $D$ satisfies STC and $x \in D$, there exists a tube $F( [0, 2\lambda],L)$ through $x$ given by a section $S \subset D$ such that
$S=F(L,[0,2\lambda])\cap D$. We may assume that $x_n \in F([\lambda, 2\lambda],L)$ for every $n \in \N$. Let $\lambda_n \in [\lambda, 2\lambda]$ be such that $\varphi( \lambda_n, x_n) \in L$, $n \in \N$. Since $F( \lambda, L) = S$, we have
\begin{equation}\label{eq conver}
\varphi\left(\lambda_n - \lambda, x_n \right) \in S \subset D, \; n \in \N.
\end{equation}
Let $\{\lambda_{n_k}\}_{k \in \mathbb{N}}$ be a subsequence of $\{\lambda_{n}\}_{n \in \mathbb{N}}$.  We may assume, up to a subsequence, that $\lambda_{n_k} \stackrel{k \rightarrow +\infty}{\longrightarrow}  t_0 \in [\lambda, 2\lambda]$. If $t_0 \neq \lambda$, using \eqref{eq conver}, 
 we obtain 
\[
\varphi\left( t_0 - \lambda , x\right) \in D
\]
and $\tau_{\xi}^*(x) = t_0 - \lambda \leq \lambda$, which is a contradiction. Hence,  $\lambda_{n_k} \stackrel{k \rightarrow +\infty}{\longrightarrow}  \lambda$. In conclusion, the sequence $\{\lambda_{n}\}_{n \in \mathbb{N}}$ converges to $\lambda$. Since $\varphi\left( \lambda_n - \lambda, x_n \right) \in  D$ for all $n \in \N$, we get
\[
\tau_{\xi}^*(x_n) = \lambda_n - \lambda \stackrel{n \rightarrow +\infty}{\longrightarrow}  0 = \tau_{\xi}^*(x).
\]
 Hence, $\tau^*_\xi$ is continuous on $X_\xi \cup D$.
\end{proof}

Now, the proof of Theorem \ref{main theorem 1} follows immediately by  \cite[Theorem A]{AC},  where the continuity of $\tau^*_\xi$ and the invariance of the non-wandering set, up to the impulsive set,  are required. Here, we obtain the regularity of the IDS implies the aforementioned conditions, according to Proposition~\ref{cont tau} and Proposition~\ref{inv n.errante}.

In order to prove Theorem \ref{main theorem 2}, we will need to the following auxiliary result.

\begin{Teo}\cite[Theorem B]{ACV} \label{Teo Alves, Carvalho, V\'asquez}   
 Let $\psi$ be the semiflow of an impulsive dynamical system $(M, \varphi, D,I)$ such that  $ I(D) \cap D = \emptyset$,  $I$ is an 1-Lipschitz function, $ D$ satisfies a half-tube condition, I(D) is transverse to the direction flow and $\tau^*_{\xi}$ is continuous. Then 
$$ h^{\tau}_{top}(\psi)= \sup_{\mu} \{ h_{\mu}(\psi)    \} .$$
\end{Teo}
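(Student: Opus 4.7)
The plan is to use the quotient approach developed in this section to reduce the statement to the classical variational principle for continuous semiflows on a compact metric space. First I would show that under the stated hypotheses the impulsive semiflow $\psi$ descends to a \emph{continuous} semiflow $\widetilde{\psi}$ on $(\pi(M),\widetilde d)$, defined by $\widetilde{\psi}_t([x]) = [\psi_t(x)]$. Well-definedness on $\sim$-classes is built into the equivalence relation, since the jump $x\mapsto I(x)$ is collapsed. Continuity of $\widetilde{\psi}$ in the space variable is where the $1$-Lipschitz hypothesis on $I$ is used: combined with the previous lemma producing representatives $p\in[x]$, $q\in[y]$ with $d(p,q)\leq\widetilde d([x],[y])$, it gives $\widetilde d(\widetilde\psi_t([x]),\widetilde\psi_t([y]))\leq d(\psi_t(p),\psi_t(q))$ right after each jump, which is then controlled by $d(p,q)$ together with the Lipschitz constants of $\varphi_t$. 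The continuity of $\tau^{*}$, the half-tube condition, and the transversality of $I(D)$ to the flow direction handle the passage across the impulsive set without creating discontinuities downstairs.

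Next I would transfer both sides of the variational principle to the quotient. For the topological entropy, I would prove that
\[
h^{\tau}_{top}(\psi) = h_{top}(\widetilde\psi),
\]
where the right-hand side is the classical topological entropy of the continuous semiflow $\widetilde\psi$ on $(\pi(M),\widetilde d)$. The key observation is that the $\delta$-windows $]\tau_j(x)-\delta,\tau_j(x)+\delta[$ removed in the definition of $J^{\tau}_{T,\delta}(x)$ correspond exactly to the segments of orbit that the projection $\pi$ glues together, so after projection a $\tau$-dynamical ball becomes an ordinary Bowen ball (up to small adjustments in $\varepsilon$ and $\delta$). Comparing separated sets in the two settings and taking the successive limits in $\varepsilon$, $\delta$ and $T$ yields the equality. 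For the measure-theoretic side, I would establish a bijection $\mu\leftrightarrow\widetilde\mu=\pi_{*}\mu$ between $\psi$-invariant Borel probabilities on $M$ and $\widetilde\psi$-invariant Borel probabilities on $\pi(M)$. Since $\pi$ has finite fibres, by the dichotomy proved in Lemma \ref{classes}, an Abramov--Rokhlin-type formula for semiflow factors gives $h_\mu(\psi)=h_{\widetilde\mu}(\widetilde\psi)$.

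Once these translations are in place, the conclusion is immediate from the classical variational principle: since $\widetilde\psi$ is a continuous semiflow on the compact metric space $(\pi(M),\widetilde d)$, one has $h_{top}(\widetilde\psi)=\sup_{\widetilde\mu}h_{\widetilde\mu}(\widetilde\psi)$, and unwinding the bijection yields the stated identity for $\psi$. The main obstacle I expect is the second step: aligning the $\tau$-entropy upstairs with the ordinary entropy on the quotient requires controlling the $\delta$-windows uniformly in $x$, so that after projection they shrink precisely to the impulsive times of $[x]$. The half-tube condition and continuity of $\tau^{*}$ are what supply this uniformity; meanwhile the $1$-Lipschitz hypothesis on $I$ is precisely what prevents a $\tau$-dynamical ball from blowing up across a jump, making the two-way comparison between dynamical balls on $M$ and on $\pi(M)$ quantitative rather than merely qualitative.
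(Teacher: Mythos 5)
Be aware first of what you are comparing against: the paper gives \emph{no} proof of this statement. It is imported verbatim from \cite{ACV} and used as a black box to deduce Theorem \ref{main theorem 2}; the only original content of Section 4 is the claim that the $1$-Lipschitz hypothesis can be dropped from the lemma about choosing representatives $p\in[x]$, $q\in[y]$ realising $\widetilde d([x],[y])$. So there is no internal argument to match line by line. What your sketch reconstructs is the strategy of the cited paper itself, which is indeed the quotient approach outlined at the start of Section 4: push $\psi$ down to a continuous semiflow on the quotient, identify the $\tau$-entropy upstairs with the classical entropy downstairs, match invariant measures and their entropies, and invoke the classical variational principle. In that sense your outline has the right architecture.

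It does, however, contain one genuine gap. The formula $\widetilde\psi_t([x]):=[\psi_t(x)]$ is \emph{not} well defined on all of $\pi(M)$, contrary to your claim that well-definedness is ``built into the equivalence relation''. If $x\in D$, then transversality forces $\tau_1(x)>0$, so for small $t>0$ one has $\psi_t(x)=\varphi_t(x)$ while $\psi_t(I(x))=\varphi_t(I(x))$; these two points are in general not $\sim$-equivalent even though $[x]=[I(x)]$ (in the annulus example of Section \ref{exemplos} they are singleton classes lying on circles of different radii). The construction must therefore be restricted to $\pi(\Omega_\psi\setminus D)$, or a suitable completion of it, where the forward invariance supplied by Proposition \ref{inv n.errante} makes the projection a genuine semiconjugacy; correspondingly, the measure correspondence is not a bijection over all of $\pi(M)$ but between $\psi$-invariant measures (which necessarily assign zero mass to $D$) and invariant measures of the quotient semiflow on the projected non-wandering set. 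The remaining steps of your outline --- the two-sided comparison of $\tau$-dynamical balls with Bowen balls on the quotient, and the preservation of $h_\mu$ under the finite-to-one factor $\pi$ --- are the right shape, but they constitute the technical core of \cite{ACV} and would need to be carried out in full before this could be called a proof.
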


As developed previously,  we proved that the hypothesis of  Theorem \ref{main theorem 2} imply the continuity of $\tau^*_{\xi}$ (Proposition~\ref{cont tau}) and thus, by Theorem \ref{Teo Alves, Carvalho, V\'asquez}, Theorem \ref{main theorem 2} holds.



\section{Examples}\label{exemplos}

In this section, we present some examples of systems for which we can apply  our results.

\begin{Ex} Consider $M=  \{ (r\cos \theta, r\sin\theta):\, r\in [0,\infty), \theta \in [0,2\pi]\}$ and the smooth vector field on $M$
\begin{equation}\label{ode1}
\left\{
\begin{array}{l}
 \theta'=0,   \\
  r' = -r.
\end{array}
\right.
\end{equation}
The semiflow $\varphi\colon  \mathbb{R}_0^{+}\times M \to M$ of this vector field has trajectories that converge towards the origin. 

 Defining the impulsive set by $D = \{(r, \theta) \in M: \,  r = 1, \, \theta\in [0, 2\pi]\}$ and the impulsive function on $D$ by
 $I(1, \theta) = (2, \theta)$, $\theta \in [0, 2\pi]$,  we have $D$ and  $I(D)$ are both transversal to the direction of the flow $\varphi$, and, hence, the impulsive semiflow $\psi$ gene\-rated by $(M, \varphi, D, I)$  admits at least one invariant probability  measure and, moreover, a variational principle holds. 
\end{Ex}

\begin{Ex}  Consider a solar heating system consisting of a solar collector and a solar storage,
governed by the system
\begin{equation}\label{Eq	SH 1}
\left\{
\begin{array}{ccc}
 \dfrac{dT_c}{dt} = -a_1T_c + a_2T_s + a_3 \vspace{.2cm}\\ 
 \dfrac{dT_s}{dt} =  a_4T_c - a_5T_s + a_6,
\end{array}
\right.
\end{equation}
where $T_c$ denotes the homogeneous temperature and outlet (fluid) temperature of the collector $(^oC)$ and $T_s$ represents the (fluid) temperature of the mixed storage $(^oC)$, subject to the impulsive condition $I = (I_1, I_2)\colon D \to I(D)$ with $D = \{(x, y) \in \R^2: \, (x- T_c^*)^2 + (y- T_s^*)^2  = \beta^2\}$ and
\[
(I_1(x,y)- T_c^*)^2 + (I_2(x,y)- T_s^*)^2  = (\beta+\epsilon)^2, \ \ (x, y) \in D,
\]
where $T_c^* = \dfrac{a_5a_3 + a_2a_6}{a_1a_5 - a_2a_4}$ and $T_s^* = \dfrac{a_4a_3 + a_1a_6}{a_1a_5 - a_2a_4}$. The constants $a_i > 0$, $i=1,\ldots, 6$, satisfy $a_1a_5 - a_2a_4 > 0$, and $\beta, \epsilon > 0$. The model \eqref{Eq	SH 1} without impulses is investigated in \cite{SKHG}. Moreover, as presented in \cite{SKHG}, the constants $a_i > 0$, $i=1,\ldots, 6$, depend on the collector/storage surface areas, specific heat capacity of the collector/storage fluid,  global solar irradiance on the collector surface,  heat loss coefficient of the storage,  environment temperature of the collector, cold inlet (fluid) temperature of the collector/storage, (fluid) temperature of the mixed storage,  environment temperature of the storage, uncertainty of the solar irradiance measurement,  overall heat loss coefficient of the collector, uncertainty of the measurement of the temperatures, calculated uncertainty of the collector temperature,  calculated uncertainty of the storage temperature, uncertainty of the flow rate measurement, volumetric pump flow rate in the collector, collector volume,  volumetric flow rate of the consumption load,  storage volume, optical efficiency of the collector, and  collector/storage fluid density.

The system \eqref{Eq SH 1} defines a continuous dynamical system $(\R^2, \phi)$ such that
\[
\lim_{t \to \infty} \phi_t(x_0, y_0) =
\left(
\begin{array}{ccc}
T_c^* \vspace{.2cm} \\
 T_s^*
\end{array}
\right),
\]
for every $(x_0, y_0) \in \R^2$, i.e., the solutions of \eqref{Eq	SH 1} converges asymptotically to its equilibrium point, see \cite{SKHG} for more details.  

When we consider the impulsive system, the temperature pair $(T_c, T_s)$ can be controlled over time 
 in such a way that it stays in a range close or not to the equilibrium. This control depends on the choices of variables 
 $\beta$, $\epsilon$ and $I$. 

Note that  $D = g^{-1}(0)$, where $0$ is a regular value of $g(x,y) =  (x- T_c^*)^2 + (y- T_s^*)^2  - \beta^2$. Thus, $D$
 is a compact smooth submanifold of codimension one with $I(D) \cap D = \emptyset$. Furthermore,  since the solutions of the non-impulsive system \eqref{Eq	SH 1} converges asymptotically to $(T_c^*, T_s^*)$, we have $D$  is transversal to the flow direction. Consequently, by Theorem \ref{main theorem 1}, there exists at least one invariant measure under  the impulsive semiflow $\psi$ generated from \eqref{Eq	SH 1}, $D$ and $I$. Let $\mu$ be an invariant measure  under $\psi$. Then, Poincar\'e Recurrence Theorem guarantees  that $\mu$-almost every initial condition is recurrent.   
\end{Ex}

\section{Equilibrium states}  In this section, we revisit the result on the existence and uniqueness of equilibrium states for potential functions established in \cite{ACS}.


As defined previously, let $\psi$ be the semiflow  of an impulsive dynamical system $(M,\varphi,D,I)$. For a given $r>0$, we denote the set $B_r(D)$ by the $r$-neighborhood of $D$ in $M$.

\begin{Def} The semiflow  $\psi$ is called  {\bf positively expansive} on $M$, if for every $\delta>0$ there exists $\epsilon>0$ such that, if  $x, y \in M$ and a continuous map $s: \mathbb{R}_0^+ \rightarrow \mathbb{R}_0^+$ with $s(0)=0$ satisfy
$d\,(\psi_t(x),\psi_{s(t)}(y)) < \epsilon$  
for all $t\ge 0$
such that $\psi_t(x),\psi_{s(t)}(y)\notin B_\epsilon(D)$, then
$y = \psi_t(x)$
for some $0<t < \delta$.
\end{Def}
\begin{Def} The semiflow $\psi$ has the {\bf specification property} on $M$, if for all $\epsilon>0$ there exists $L>0$ such that,
for any sequence $x_0,\dots, x_n$ of points in $M$ and any sequence $0\le t_0<\cdots< t_{n+1}$ such that
$t_{i+1}-t_i \geq L$ for all $0\le i\le n$, there are  $y \in M$ and  $r:\mathbb{R}^+_0 \rightarrow \mathbb{R}^+_0$, which is  constant on each interval $[t_i, t_{i+1}[$, whose values depend only on $x_0,\dots, x_n$, that also satisfy
$$ r([t_0, t_1[)=0\quad \text{and}\quad
|r([t_{i+1}, t_{i+2}[) - r([t_i, t_{i+1}[)| < \epsilon,
$$
for which
$$d\,(\psi_{t+r(t)}(y),\psi_{t-t_i}(x_i)) < \epsilon, \quad  \text{for all} \; \; t \in [t_i, t_{i+1}-L[ \;\; \text{and} \;\; 0\le i \le n.$$
In addition, the specification is called {\bf periodic} if we can always choose $y$ periodic with the minimum period in $[t_{n+1}-t_0 - n\epsilon, t_{n+1}-t_0 + n\epsilon]$.
\end{Def}


\begin{Def}\label{est eq}
A probability measure $\mu$, $\psi$-invariant, is an {\bf equilibrium state} for $\psi$ with respect to  a  potential function $f: M \to \mathbb{R}$ when it satisfies:
          $$h_{\mu} (\psi) + \int f \, d\mu  = \sup_\eta \left\{ h_{\eta} (\psi) + \int f \, d\eta \right\}  $$ 
where the supremum is taken over all $\psi$-invariant probability measures $\eta$. 
\end{Def}


Let $V^*(\psi)$ be the set of all continuous maps $f\colon M \rightarrow \mathbb{R}$ satisfying:
\begin{enumerate}
\item $f(x)=f(I(x))$ for all $x\in D$;
\item there are $K>0$ and $\epsilon > 0$ such that for every $t>0$  we have
\begin{equation}\label{temK*}
\left|\int_0^t f(\psi_s(x))\,ds - \int_0^t f(\psi_s(y))\,ds\right|< K,
\end{equation}
whenever
$d\,(\psi_{s}(x), \psi_s(y)) < \epsilon$ for all $ s \in [0,t]$ such that $\psi_s(x),\psi_s(y)\notin B_\epsilon(D)$.
\end{enumerate}

Applying  Theorem A in \cite{ACS}, we can state the result on the existence and uniqueness of equilibrium states for potential functions in $V^*(\psi)$ without assuming that $I(\Omega_{\psi}\cap D) \subset \Omega_{\psi}\backslash D$ and that $\tau_{\xi}^*$ is continuous on $X_\xi \cup D$.



\begin{maintheorem}\label{te.existence_uniqueness}
 Let $\varphi\colon  M\times\mathbb{R}^{+}_0 \to M$ be a  smooth semiflow, $D$ be a smooth compact submanifold of codimension one and $I\colon  D\to M$ be an 1-Lipschitz function. If $D$ and $I(D)$ are transversal to the flow direction, $\psi$  is positively expansive and has the periodic specification property in  $\Omega_\psi \backslash D$, then any potential function $f \in V^*(\psi)$ has an equilibrium state. Moreover, if $\dim(M)<\infty$  and there is $k>0$ such that $\#I^{-1}(\{y\})\le k$ for every $y \in I(D)$, then the equilibrium state is unique.
 \end{maintheorem}
 

 \section*{Acknowledgements}

The authors thank the anonymous referee for the helpful comments and suggestions.

\newpage

\end{document}